\title{\LARGE \bf
Successive Convexification of Non-Convex Optimal Control Problems and Its Convergence Properties
}
\author{Yuanqi Mao, Michael Szmuk, and Beh\c{c}et A\c{c}\i kme\c{s}e
\thanks{Yuanqi Mao, Michael Szmuk, and Beh\c{c}et A\c{c}\i kme\c{s}e are with the Department of Aeronautics and Astronautics, University of Washington, Seattle, WA 98105, USA. Emails: {\tt\small yqmao@uw.edu}, {\tt\small mszmuk@uw.edu},  {\tt\small behcet@uw.edu}}%
}
\newtheorem{problem}{Problem}
\newtheorem{lemma}{Lemma}
\newtheorem{theorem}{Theorem}
\newtheorem{definition}{Definition}
\newtheorem*{remark}{Remark}
\newtheorem{assumption}{Assumption}
\DeclareMathOperator*{\esssup}{ess\,sup}
\begin{document}

\maketitle
\thispagestyle{empty}
\pagestyle{empty}

\begin{abstract}

This paper presents  an algorithm to solve non-convex optimal control problems, where non-convexity can arise from nonlinear dynamics,  and non-convex state and control constraints. This paper assumes that  the state and control constraints are already convex or convexified, the proposed algorithm convexifies the nonlinear dynamics, via a linearization,  in a successive manner. Thus at each succession, a convex optimal control subproblem is solved. 
Since the dynamics are linearized and other constraints are convex, after a discretization, the subproblem can be expressed as a finite dimensional convex programming subproblem. Since convex optimization problems can be solved very efficiently, especially with custom solvers, this subproblem can be solved in time-critical applications, such as real-time path planning for autonomous vehicles. Several safe-guarding techniques are incorporated into the algorithm, namely \textit{virtual control} and \textit{trust regions}, which add another layer of algorithmic  robustness. A convergence analysis is presented in continuous-time setting. By doing so, our convergence results will be independent from any numerical schemes used for discretization. Numerical simulations are performed for an illustrative trajectory optimization example.
\end{abstract}

\section{INTRODUCTION} \label{sec:intro}
In this paper, we present a Successive Convexification (\texttt{SCvx}) algorithm to solve non-convex optimal control problems. The problems  involve continuous-time nonlinear dynamics, and possibly non-convex state and control constraints. A large number of real-world problems fall into this category. An example in aerospace applications is the planetary landing problem~\cite{pointing2013,larssys12}. In~\cite{pointing2013}, non-convexity arises from the minimum thrust constraints, while in~\cite{larssys12} nonlinear, time-varying gravity fields and aerodynamic forces consist of additional sources of non-convexity. State constraints can render the problem non-convex as well. One example is the optimal path planning of autonomous vehicles in the presence of obstacles~\cite{richards2002}. Another instance can be found in the highly constrained spacecraft rendezvous and proximity operations~\cite{liu2014solving}.

Such  optimal control problems have been solved  by a variety of approaches~\cite{buskens,gerdtsb}. Most methods  first discretize the problem \cite{hull1997}  and then use  general nonlinear programming solvers to obtain a solution. Unfortunately, general nonlinear optimization have some challenges. First,  there is few known bounds on the computational effort needed. Secondly, they can  become intractable in the sense that a bad initial guess could result in divergence of the numerical algorithm. These two major drawbacks make it unsuitable  for automated solutions and real-time  applications, where computation speed and guaranteed convergence are exactly the two main concerns.
On the other hand, convex programming problems can be reliably solved in polynomial time to global optimality~\cite{BoydConvex}. More importantly, recent advances have shown that these problems can be solved in real-time by both generic Second Order Cone Programming (SOCP) solvers~\cite{alexd}, and by customized solvers which take advantage of specific problem structures~\cite{mattingley2012,dueri2014automated}. This motivates researchers to formulate optimal control problems in a convex programming framework for real-time purposes, e.g. real-time Model Predictive Control (MPC)~\cite{garcia_morari_MPC,MPC_MayneRew00,Zeilinger2014683}.

Although convex programming already suits itself well in solving some optimal control problems with linear dynamics and convex constraints, most real-world problems are not such inherently convex, and thus not readily to be solved. For certain non-convex control constraints, however, recent results  have proven that they can be posed as convex ones without loss of generality via a procedure known as \textit{lossless convexification}~\cite{pointing2013,larssys12,matt_aut1,behcet_aut11}. Certain non-convex state constraints can also be convexified using a successive method as suggested in~\cite{liu2014solving}, provided that they are concave.

This paper aims  to tackle a specific  source of non-convexity: the nonlinear dynamics, which will later be extended (as immediate future work) to non-convexities in state and control constraints that cannot be handled via the methods mentioned earlier. The basic idea is to successively linearize the dynamic equations, and solve a sequence of convex subproblems, specifically SOCPs, as we iterate. While similar ideas in both finite dimensional optimization problems~\cite{griffith1961nonlinear,palacios1982nonlinear,zhang1985improved} and optimal control problems~\cite{rosen1966iterative,machielsen1987,mayne1987exact} have long been tried, few convergence results were reported, and they may suffer from poor performance. Also, different heuristics have been utilized along the way. 

Our \texttt{SCvx} algorithm, on the other hand, presents a systematic way. More importantly, through a continuous-time convergence analysis, we guarantee that the \texttt{SCvx} algorithm will converge, and the solution it converges to will recover optimality for the original problem.
To facilitate convergence, \textit{virtual control} and \textit{trust regions} are incorporated into our algorithm. The former acts like an exact penalty function~\cite{zhang1985improved,mayne1987exact,han1979exact}, but with additional controllability features. The latter is similar to a standard trust-region-type updating rules~\cite{conn2000trust}, but the distinction lies in that we solve each convex subproblem to full optimality to take advantage of powerful SOCP solvers.
%
To the best of our knowledge, the main contributions of this work are: 1) A novel Successive Convexification (\texttt{SCvx}) algorithm to solve non-convex optimal control problems; 2) A convergence proof in continuous-time.

\section{SUCCESSIVE CONVEXIFICATION}

\subsection{Problem Formulation}
The following nonlinear system dynamics are assumed,
\begin{equation}\label{eq:dynamics}
	\begin{aligned}
	\dot{x}(t)=f(x(t),u(t),t)  \qquad a.e. \quad 0\leq t\leq T,
	\end{aligned}	
\end{equation}
where $ x: \left[ 0,T\right] \rightarrow \mathbb{R}^n $, is the state trajectory, $ u: \left[ 0,T\right] \rightarrow \mathbb{R}^m $, is the control input, and $ f: \mathbb{R}^n \times\mathbb{R}^m \times\mathbb{R} \rightarrow\mathbb{R}^n $, is the control-state mapping, which is Fr\'{e}chet differentiable with respect to all arguments. The state is initialized as $ x_0 $ at time $ t=0 $, i.e. $ x(0)=x_0 $, and the time horizon,  $ T $, is finite. Note that, though $ T $ is  fixed here, free final time problems can be handled under this framework with no additional difficulties. 
The control input  $ u $ is assumed to be Lebesgue integrable on $ \left[ 0,T\right] $, e.g., $ u $ can be measurable and essentially bounded (i.e. bounded almost everywhere) on $ \left[ 0,T\right] $: $ u\in L_\infty[0,T]^m $, with the $ \infty-norm $ defined as $$ \|u\|_\infty \coloneqq \esssup\limits_{t\in[0,T]} \|u(t)\|, $$ 
where $ \|\cdot\| $ is the Euclidean vector norm on $ \mathbb{R}^m $, and $ \esssup $ means the essential supremum.
As a result of the differentiability of function $ f $, $ x $ is continuous on $ \left[ 0,T\right] $, and $ x\in W_{1,\infty}[0,T]^n $, i.e. the space of absolute continuous functions on $ \left[ 0,T\right] $ with measurable and essentially bounded (first order) time derivatives. The \textit{1-norm} of this space is defined by $$ \|x\|_{1,\infty} \coloneqq \max\{\|x\|_\infty,\|\dot{x}\|_\infty\}. $$ It can be shown that, equipped with  these two norms, both $ L_\infty[0,T]^m $ and $ W_{1,\infty}[0,T]^n $ are Banach spaces.

Besides the dynamics, most real-world problems include  control  and state constraints, that must hold for all $ t\in \left[ 0,T\right] $. For simplicity, we assume these constraints are time invariant, i.e.,  $ u(t)\in U $ and $ x(t)\in X $. Here $ U $ and $ X $ are non-convex sets in general, but as we mentioned in \ref{sec:intro}, there are several methods proposed to transform these constraints into convex ones. Further methods of their convexification will be the subject of a future paper. In following, therefore, we assume that they have already been convexified.
Another element is an objective functional (cost), which  is also assumed to be convex. 
Note that any non-convexity in the cost can be transferred into constraints and convexified afterwards. 
Hence the following Non-Convex Optimal Control Problem (NCOCP) is considered:
\begin{problem}[\textbf{NCOCP}] \label{prob:NCOCP}
	Determine a control function $ u^*\in L_\infty[0,T]^m $, and a state trajectory $ x^*\in W_{1,\infty}[0,T]^n $, which minimize the functional
	\begin{subequations}
	\begin{equation}
	C(x,u):=	\varphi(x(T),T)+\int_{0}^{T}L(x(t),u(t),t)\;dt, \label{cost:original}
	\end{equation}
	subject to the constraints:
		\begin{align}
			\dot{x}(t)&=f(x(t),u(t),t)  &a.e. \quad 0\leq t\leq T, \label{eq:dynamics2} \\
			u(t)&\in U &a.e. \quad 0\leq t\leq T, \\
			x(t)&\in X &0\leq t\leq T,
		\end{align}
	\end{subequations}
	where $ \varphi: \mathbb{R}^n\times\mathbb{R}\rightarrow\mathbb{R} $, is the terminal cost, 
	$ L: \mathbb{R}^n\times\mathbb{R}^m\times\mathbb{R}\rightarrow\mathbb{R} $, is the running cost, and both are convex and Fr\'{e}chet differentiable;
	$ U\subset\mathbb{R}^m $, and $ X\subset\mathbb{R}^n $. Both are convex and compact sets with nonempty interior.
\end{problem}

\subsection{Algorithm Description}
The only non-convexity remaining in Problem~\ref{prob:NCOCP} lies in the nonlinear dynamics \eqref{eq:dynamics2}. Since it is  an equality constraint, a natural way to convexify it is linearization by using its first order Taylor approximation. The solution to the convexified problem, however, won't necessarily be the same as its non-convex counterpart. To recover optimality, we need to come up with an algorithm that can find a solution which satisfies at least first order optimality condition of the original problem. A natural thought would be doing this linearization successively, i.e. at $ k^{th} $ succession, we linearize the dynamics about the trajectory and the corresponding control computed in the $ (k-1)^{th} $ succession. This procedure is repeated  until convergence. This process essentially forms the basic idea behind our Successive Convexification (\texttt{SCvx}) algorithm.

\subsubsection{Linearization}
Assume the $ (i-1)^{th} $ succession gives us a solution $ (x^{i-1}(t), u^{i-1}(t)) $. Let 
\begin{align*}
A(t)&= \dfrac{\partial}{\partial x}f(x^{i-1}(t),u^{i-1}(t),t), \\
B(t)&= \dfrac{\partial}{\partial u}f(x^{i-1}(t),u^{i-1}(t),t),\\
D(t)&= \dfrac{\partial}{\partial t}f(x^{i-1}(t),u^{i-1}(t),t),
\end{align*}
  $ d(t)=x(t)-x^{i-1}(t) $, and $ w(t)=u(t)-u^{i-1}(t) $, then the first order Taylor expansion about that solution will be
\begin{equation}
	\dot{d}(t)=A(t)d(t)+B(t)w(t)+D(t)+H.O.T.. \label{eq:dyn_lin}
\end{equation}
This is a linear system with respect to $ d(t) $ and $ w(t) $, which are our new states and control respectively. The linearization procedure gets us the benefit of convexity, but it also introduces two new issues, namely \textit{artificial infeasibility} and \textit{approximation error}. We will address them in the following two subsections.

\subsubsection{Virtual Control} \label{sec:virtual}
At various points in the solution space, the above method can generate an infeasible problem, even if the original nonlinear problem itself is feasible. That is  the \textit{artificial infeasibility} introduced by linearization. In such scenarios, the undesirable infeasibility obstructs the iteration process and prevents convergence. 
To prevent this \textit{artificial infeasibility}, we introduce an additional control input $ v(t) $, called \textit{virtual control}, to the linear dynamics \eqref{eq:dyn_lin} (without the  higher order terms):
\begin{equation}
	\dot{d}(t)=A(t)d(t)+B(t)w(t)+E(t)v(t)+D(t), \label{eq:dyn_pen}
\end{equation}
where $ E(t) $ can be chosen based on $ A(t) $ such that the pair $A(\cdot),E(\cdot)$ is controllable. Then, since $ v(t) $ is unconstrained, any state in the feasible region can be reachable in finite time. This is why this \textit{virtual control} can eliminate the \textit{artificial infeasibility}. For example, on  autonomous vehicles, the \textit{virtual control} can be understood as a synthetic acceleration that acts on the vehicle, which can drive the vehicle virtually anywhere in the feasible area.

Since we want to resort to this \textit{virtual control} as needed, it will be heavily penalized via an additional term $ \lambda \gamma(Ev) $ in the cost, where $ \lambda $ is the penalty weight, and $ \gamma(\cdot) $ is the penalty function, defined by
\begin{equation*}
	\gamma(\cdot) \coloneqq \esssup\limits_{t\in[0,T]} \|\cdot(t)\|_1,
\end{equation*}
where $ \|\cdot\|_1 $ is the $ L_1 $ norm on $ \mathbb{R}^n $. For example, $ \|x(t)\|_1 = \sum_{i=1}^{n}|x(t)| $. Thus we have
\begin{equation*}
\gamma(Ev) \coloneqq  \esssup\limits_{t\in[0,T]} \|E(t)v(t)\|_1.
\end{equation*}
Now the penalized cost after linearizing will be defined as
\begin{equation}
L(d,w) \coloneqq 
C(x,u) +\lambda\gamma(Ev), \label{cost:linear}
\end{equation}
while the penalized cost before linearizing can be formulated in a similar fashion:
\begin{equation}
	J(x,u) \coloneqq 
	C(x,u) +\lambda\gamma(\dot{x}-f). \label{cost:penalty}
\end{equation}

\subsubsection{Trust Regions}
Another concern when linearizing is potentially rendering the problem unbounded. A simple example will be linearizing the cost $ y_1(x)=0.5x^2 $ at $ x=1 $ to get $ y_2(x)=x-0.5 $. Now if going left is a feasible direction, then the linearized problem could potentially be unbounded while the nonlinear problem will definitely find its minimum at $ x=0 $. The reason behind is: when large deviation is allowed and occurred, the linear approximation sometimes fails to capture the distinction made by nonlinearity, for instance $ y_1(x) $ attains its stationary point at $ x=0 $, while $ y_2(x) $ certainly does not.


To mitigate this risk,  we ensure that the linearized trajectory does not deviate significantly from the nominal one obtained in the previous succession, via  a \textit{trust region} on our new control input,
\begin{equation} \label{eq:tr}
	\|w\|_\infty \leq \varDelta,
\end{equation}
and thus our new state will be restricted as well due to the dynamic equations.  The rationale is that we only trust the linear approximation in the thrust region. Continuing with the above example, if we restrict $ x $ to be within the trust region, say $ x\in[0.5, 1.5] $, and adjust the trust region radius $ \varDelta $ as we iterate, it will eventually converge to $ x=0 $.
Fig.~\ref{fig:tr2} shows the typical convergence process of this trust-region type algorithm in solving a 2-D problem. The algorithm can start from virtually anywhere, and manages to converge to a feasible point. Note that the figure also demonstrates \textit{virtual control}, as the trajectory deviates from the constraint in the first few successions.
\begin{figure}[!h]
	\begin{center}
		\includegraphics[width=0.4\textwidth]{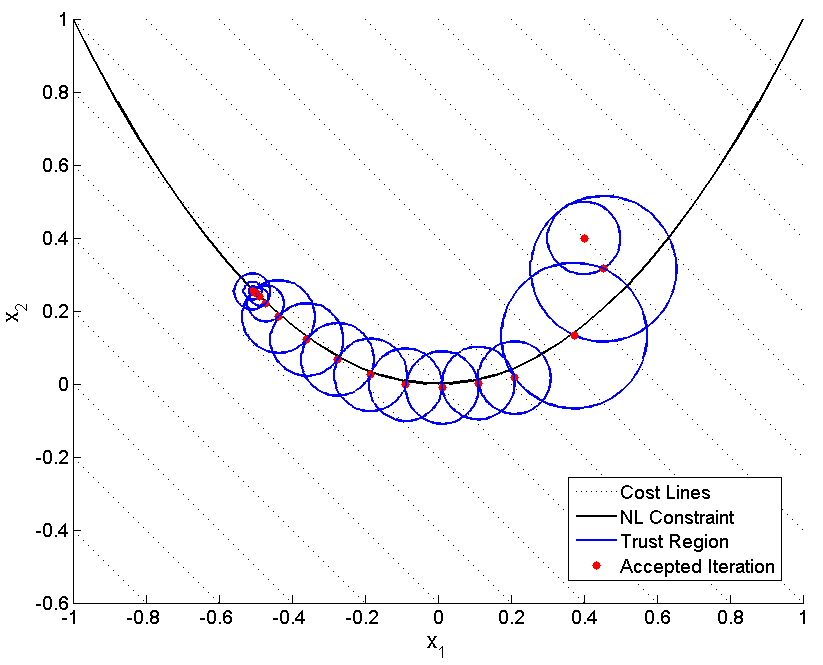}
		\caption{Typical convergence process with \textit{virtual control} and \textit{trust regions}.}
		\label{fig:tr2}
	\end{center}
\end{figure}
\subsubsection{The \texttt{SCvx} Algorithm}
The final problem formulation and the \texttt{SCvx} algorithm can now be presented. Considering the \textit{virtual control} and the \textit{trust regions}, a Convex Optimal Control Problem (COCP) is solved at $ k^{th} $ succession:
\begin{problem}[\textbf{COCP}] \label{prob:COCP}
	Determine $ w^*\in L_\infty[0,T]^m $, and $ d^*\in W_{1,\infty}[0,T]^n $, which minimize the functional $ L(d,w) $ defined in \eqref{cost:linear}, subject to constraints \eqref{eq:dyn_pen}, \eqref{eq:tr}, as well as
	\begin{align*}
	u^{k}(t)+w(t)&\in U,  \\
	x^{k}(t)+d(t)&\in X.
	\end{align*}
\end{problem}
With this convex subproblem,  \texttt{SCvx} algorithm is given  as in Algorithm \ref{algo:SCvx}.
\begin{algorithm}
	\caption{Successive Convexification (\texttt{SCvx})}
	\label{algo:SCvx}
	\begin{algorithmic}
		\STATE \textbf{Input} Select initial state $ x^{1} $ and control $ u^{1} $ s.t. $ x^{1}\in X $ and  $ u^{1}\in U $. Initialize trust region radius with positive $ \varDelta^{1} $ and lower bound $ \varDelta_l $. Select positive penalty weight $ \lambda $, and parameters $ 0<\rho_0<\rho_1<\rho_2<1 $ and $ \alpha>1 $.
		\STATE \textbf{Step 1} At each succession $ k $, solve Problem~\ref{prob:COCP} at $ (x^k,u^k,\varDelta^k) $ to get an optimal solution $ d^k $.
		\STATE \textbf{Step 2} Compute the "actual" change in the penalized cost \eqref{cost:penalty}:
		\begin{equation} \label{eq:actual}
		\Delta J(x^k,u^k) \coloneqq \Delta J^k=J(x^k,u^k)-J(x^k+d^k,u^k+w^k),
		\end{equation}
		and the "predicted" change by linear approximation:
		\begin{equation} \label{eq:predict}
		\Delta L(d^k,w^k) \coloneqq \Delta L^k=J(x^k,u^k)-L(d^k,w^k).
		\end{equation}
		\IF {$ \Delta L^k=0 $} \STATE \textbf{Stop}, and \textbf{return} $ (x^k,u^k) $;
		\ELSE \STATE Compute the ratio $ r^k=\Delta J^k/\Delta L^k $.
		\ENDIF
		\STATE \textbf{Step 3} 
		\IF {$ r^k<\rho_0 $} \STATE Reject this step, contract the trust region radius, i.e. $ \varDelta_k\leftarrow\varDelta_k/\alpha $ and go back to Step 1;
		\ELSE \STATE Accept this step, i.e. $ x^{k+1}\leftarrow x^k+d^k $, $ u^{k+1}\leftarrow u^k+w^k $, and update $ \varDelta^{k+1} $ by
		\begin{equation*}
		\varDelta^{k+1}=\begin{cases}
		\varDelta^k/\alpha, & \text{if }  r^k<\rho_1;\\
		\varDelta^k, & \text{if } \rho_1\leq r^k<\rho_2;\\
		\alpha\varDelta^k, & \text{if } \rho_2\leq r^k.
		\end{cases}
		\end{equation*}
		\ENDIF
		\STATE  $ \varDelta^{k+1}\leftarrow \max\{\varDelta^{k+1}, \varDelta_l \} $, $ k\leftarrow k+1 $, and go to Step 1.
	\end{algorithmic}
\end{algorithm}
This algorithm is of trust region type, and follows standard trust region radius update rules with some  modifications. 

One important distinction lies in the subproblem to be solved at each succession. Conventional trust region algorithms usually perform a line search along the Cauchy arc to achieve a "sufficient" reduction~\cite{conn2000trust}. In our algorithms, however, a full convex optimization problem is solved to speed up the process, i.e. we solve the subproblem to its full optimality. As a result, the number of successions can be significantly less, by achieving more cost reduction at each succession. Admittedly, computational effort may slightly increase at each succession, but thanks to the algorithm  customization techniques \cite{dueri2014automated}, we are able to solve each convex subproblem fast enough to outweigh the negative impact of solving it to full optimality.

In Step 2, the ratio $ r^k $ is used as a metric for the quality of   linear approximations. A desirable scenario is when $ \Delta J^k $  agrees with $ \Delta L^k $, i.e. $ r^k $ is close to 1. Hence if $ r^k $ is above $ \rho_2 $, which means our linear approximation predicts the cost reduction  well, then  we may choose to enlarge the trust region in Step 3, i.e., we  put more faith in our approximation. Otherwise, we may keep the trust region unchanged, or contract its radius if needed. The most unwanted situation is when $ r^k $ is negative, or close to zero. The current step will be rejected in this case, and one has to contract the trust region and re-optimize at $ (x^k,u^k) $.

\section{CONVERGENCE ANALYSIS}
In this section, we present a convergence analysis in the continuous-time setting. Hence we re-formulate the  continuous-time optimal control problem as an infinite dimensional optimization problem in Banach space to simplify notations. To do this, we treat both the states and the control as independent variables, so that in Problem~\ref{prob:NCOCP} and Problem~\ref{prob:COCP}, \eqref{eq:dynamics2} and \eqref{eq:dyn_pen} are considered as equality constraints. For simplicity, {\em we  denote our variable pair $ (x,u) $ as $ x $ in the sequel}. Thus the space X will be re-defined as $$ X \coloneqq W_{1,\infty}[0,T]^n \times L_\infty[0,T]^m. $$ Following this definition, the original dynamics will be represented by a set of algebraic equations:
\begin{equation}
g_{i}(x)=0, \quad i=1,2,\ldots,n, \label{eq:eq_con}
\end{equation}
and the original state and control constraints will be represented by a set of inequalities:
\begin{equation}
	h_{j}(x)\leq 0, \quad j=1,2,\ldots,n+m. \label{eq:ineq_con}
\end{equation}
Denote the set of $ x $ satisfying \eqref{eq:ineq_con} as $ F $,
\begin{equation}
	F=\{x\,|\,h_{j}(x)\leq 0, \quad j=1,2,\ldots,n+m\}. \label{set:feasible}
\end{equation}

For objective functional, the original one \eqref{cost:original} becomes
\begin{equation}
\Psi(x) \coloneqq \varphi(x(T),T)+\int_{0}^{T}L(x(t),t)\;dt. \label{cost:inf_opt}
\end{equation}
Note that $ \Psi(\cdot) $ is Fr\'{e}chet differentiable, since $ \varphi(\cdot) $ and $ L(\cdot) $ are both differentiable. Now Problem~\ref{prob:NCOCP} is ready to be re-formulated as:
\begin{problem} \label{prob:inf_opt}
	Determine $ x\in X $, which minimizes $ \Psi(x) $ defined in \eqref{cost:inf_opt}, subject to constraints \eqref{eq:eq_con} and \eqref{eq:ineq_con}.
\end{problem}

If we rewrite \eqref{eq:eq_con} in a more concise form, $ g(x)=0 $, then the penalized cost defined in \eqref{cost:penalty} will be
\begin{equation}
J(x,\lambda) \coloneqq \Psi(x)+\lambda\gamma(g(x)). \label{cost:inf_pen}
\end{equation}
Then the associated penalty problem can be established in a similar manner:
\begin{problem} \label{prob:inf_pen}
	Determine $ x\in X $, which minimizes $ J(x,\lambda) $ defined in \eqref{cost:inf_pen}, subject to constraints \eqref{eq:ineq_con}.
\end{problem}

\subsection{Exactness of Penalty Function $ \gamma(g(x)) $}
We say a penalty function is \textit{exact}, if there exists a finite $ \lambda $ such that problems before and after penalizing are equivalent in the sense of optimality conditions. A geometric justification of exactness for $ \gamma(g(x)) $ with different norms can be found in~\cite{mayne1987exact}. Alternatively, we can also show it via analysis. To do this, we need to introduce some definitions and assumptions first.
\begin{definition}[\textbf{Local Optimum}] \label{def:local_opt}
	If there exist $ \bar{\lambda}\geq 0 $ and $ \bar{x}\in F $ such that $ J(\bar{x},\lambda)\leq J(x,\lambda) $ for all $ \lambda\geq \bar{\lambda} $ and for all $ x\in N(\bar{x})\cap F $, where $ N(\bar{x}) $ denotes an open neighborhood of $ \bar{x} $, then $ \bar{x} $ is a local optimum of Problem~\ref{prob:inf_opt}.
\end{definition}

\begin{assumption}[\textbf{LICQ}] \label{asup:LICQ}
	Linear Independence Constraint Qualification (LICQ) is satisfied at $ \bar{x} $ if the gradients of active constraints
	\begin{equation*}
		\{\,\nabla g_{1}(\bar{x}),\ldots,\nabla g_{n}(\bar{x}); \nabla h_{i}(\bar{x}), i\in I(\bar{x})\,\}
	\end{equation*}
	are linearly independent, where $ I(\bar{x}) $ is the index set of active inequality constraints,
	\begin{equation*}
	I(\bar{x}) = \{ i\,|\,h_{i}(\bar{x})=0, \quad i=1,2,\ldots,n+m \}.
	\end{equation*}
\end{assumption}


Now we are ready to present our first theorem, which states the stationarity part of the first order necessary conditions (KKT conditions) for a point $ \bar{x} $ to be a local solution of Problem~\ref{prob:inf_opt}, as in Definition~\ref{def:local_opt}.
\begin{theorem}[\textbf{Stationary Conditions}] \label{thm:kkt}
	If the nonlinear constraints of Problem~\ref{prob:inf_opt} satisfy LICQ, and $ \bar{x} $ is a local solution of Problem~\ref{prob:inf_opt}, then there exist Lagrange multipliers $ \mu_i $ and $ \sigma_i\geq 0 $ such that
	\begin{equation*}
	\nabla \Psi(\bar{x}) + \esssup\limits_{t\in[0,T]}\sum_{i=1}^{n}\mu_i \nabla g_i(\bar{x}) + \sum_{i\in I(\bar{x})}\sigma_i \nabla h_i(\bar{x}) = 0.
	\end{equation*}
\end{theorem}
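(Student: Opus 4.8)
The plan is to adapt the classical finite-dimensional KKT argument to this infinite-dimensional Banach space setting, relying on Fr\'echet differentiability to linearize, on LICQ for the constraint qualification, and on a Hahn--Banach separation argument to extract the multipliers. First I would translate the local optimality of $\bar{x}$ into a variational inequality: since $\bar{x}$ minimizes $\Psi$ over the feasible set $F$ intersected with the equality manifold $\{g=0\}$, every admissible variation $h$ --- that is, every direction tangent to the feasible set at $\bar{x}$ --- must satisfy $\langle \nabla \Psi(\bar{x}), h\rangle \geq 0$. This is the direct rendering of ``no feasible descent direction'' into the Banach space $X$, and Fr\'echet differentiability guarantees that this directional derivative is exactly the pairing of $\nabla\Psi(\bar{x})\in X^*$ with $h$.

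The second step is to characterize the cone of admissible variations. Using the Fr\'echet derivatives of the constraints I would define the linearized cone by $\langle \nabla g_i(\bar{x}), h\rangle = 0$ for $i=1,\ldots,n$ and $\langle \nabla h_i(\bar{x}), h\rangle \leq 0$ for $i\in I(\bar{x})$. The crucial point is to show that this linearized cone actually coincides with the true tangent cone to the feasible set. For the equality part this is exactly where LICQ is indispensable: linear independence of the gradients $\nabla g_i(\bar{x})$ renders the linearized operator $Dg(\bar{x})$ surjective, and Lyusternik's theorem then guarantees that every solution of the linearized equations is tangent to the genuine constraint manifold, so that no spurious first-order directions are lost.

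With the tangent cone identified with the linearized cone, the optimality condition of the first step reads: $-\nabla \Psi(\bar{x})$ lies in the polar of this cone, equivalently in the closed convex cone generated by $\{\nabla g_i(\bar{x})\}_{i=1}^n$ (with free sign) and $\{\nabla h_i(\bar{x})\}_{i\in I(\bar{x})}$ (with nonnegative coefficients). Invoking a generalized Farkas lemma --- whose applicability again hinges on LICQ, which ensures this cone is closed in the dual space --- then yields multipliers $\mu_i$ and $\sigma_i\geq 0$ together with the stationarity identity. The $\esssup$ in front of the equality term reflects that each dynamic constraint $g_i(x)=0$ must hold pointwise almost everywhere on $[0,T]$; the pairing of the multiplier with this continuum of constraints is taken in the dual of the $L_\infty$-type space, which is where the essential supremum enters.

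The main obstacle I expect is precisely the step equating the tangent cone with the linearized cone, together with the accompanying closedness of the constraint cone. In finite dimensions both facts are routine, but in $X = W_{1,\infty}[0,T]^n \times L_\infty[0,T]^m$ they require the surjectivity supplied by LICQ combined with Lyusternik's theorem, and careful handling of the dual of an $L_\infty$-type space, whose elements need not be countably additive. Showing that the multipliers can be taken in the concrete stated form, rather than as abstract dual elements, is the delicate part that LICQ and the exactness machinery of this subsection are designed to support.
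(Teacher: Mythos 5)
Your route is genuinely different from the paper's: the paper does not carry out a classical KKT derivation at all, but simply invokes Corollary~1 of Clarke's 1976 multiplier rule (a nonsmooth-analysis result obtained via generalized gradients), ``with minor modifications to facilitate the essential supremum.'' Your plan is the classical smooth alternative --- variational inequality, Lyusternik's theorem to identify the tangent cone with the linearized cone under LICQ, then a Farkas/polar-cone argument to extract multipliers. That is a legitimate and more self-contained strategy, and it has the virtue of making explicit exactly which infinite-dimensional facts are being used, whereas the paper's citation buries them.

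However, as a proof the proposal stops short at precisely the steps that carry all the difficulty in this setting, and you acknowledge them without resolving them. Three points deserve to be called gaps rather than ``expected obstacles.'' First, the equality constraint $g(x)=0$ is the dynamics $\dot{x}-f(x,u,t)=0$, a map into $L_\infty[0,T]^n$; Lyusternik requires surjectivity of the linearized operator $d\mapsto \dot{d}-A(t)d-B(t)w$ onto that space, which is a controllability-type condition and is not delivered by the paper's LICQ as stated (linear independence of finitely many ``gradients'' is a finite-dimensional surrogate). Second, the generalized Farkas step needs the cone generated by the constraint gradients to be weak-$*$ closed in $X^*$; in the dual of an $L_\infty$-type space this is not automatic from LICQ and must be argued. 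Third, elements of $L_\infty[0,T]^*$ need not be representable by functions (they can be purely finitely additive), so passing from abstract dual multipliers to the concrete $\esssup\sum_i\mu_i\nabla g_i(\bar{x})$ form in the statement is itself a nontrivial representation result --- this is exactly the content of Clarke's theorem that the paper leans on. Until those three steps are supplied, the proposal is a correct outline of a different proof, not a proof.
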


\begin{proof}
	It directly follows Corollary 1 in~\cite{clarke1976new}, with some minor modifications to facilitate the essential supremum, $ \esssup\limits_{t\in[0,T]} (\cdot) $.
\end{proof}

An equivalent form of Theorem~\ref{thm:kkt} is Theorem 3.11 from~\cite{machielsen1987}, which involves a Hamiltonian and is interpreted as a local minimum principle.

To examine the exactness of $ \gamma(g(x)) $, we also need to establish optimality conditions for the penalty problem, i.e. Problem~\ref{prob:inf_pen}. For fixed $ \lambda $, the cost $ J(x) $ is not differentiable everywhere due to non-smoothness of $ \gamma(g(x)) $. However, since $ \Psi(x) $ and $ g(x) $ are both continuously Fr\'{e}chet differentiable, $ J $ is locally Lipschitz continuous. Then we have
\begin{definition}[\textbf{GDD}] \label{def:gdd}
	If $ J $ is locally Lipschitz continuous, the Generalized Directional Derivative (GDD) of $ J $ at $ \bar{x} $ in any direction s exists, and can be defined as
	\begin{equation} \label{eq:gdd}
	dJ(\bar{x},s) \coloneqq \limsup\limits_{\mathclap{\substack{x\rightarrow \bar{x} \\ \delta\rightarrow 0^+}}} \frac{J(x+\delta s)-J(x)}{\delta}.
	\end{equation}
\end{definition}
By~\cite{fletcher1981practical}, $ dJ $ as defined above also satisfies the following implicit relationship,
\begin{equation} \label{eq:gdd2}
	dJ(\bar{x},s)=\max \{\langle\nu, s\rangle\,|\,\nu\in \partial J(\bar{x})\},
\end{equation}
where $ \partial J(\bar{x}) $ is the generalized differential of $ J $ at $ \bar{x} $, i.e.
\begin{equation} \label{eq:gen_diff}
	\partial J(\bar{x})=\{\nu\,|\,dJ(\bar{x},y)\geq \nu y, \forall y\in X \}.
\end{equation}
Applying Theorem 1 in~\cite{clarke1976new} and Using the above definitions, we have
\begin{lemma} \label{lem:stationary}
	If $ \bar{x} $ is a local solution of Problem~\ref{prob:inf_pen}, then there exist multipliers $ \sigma_i\geq 0 $ such that
	\begin{equation} \label{eq:stationary}
		0\in \partial J(\bar{x})+\left\lbrace y\,|\,y=\sum_{i\in I(\bar{x})}\sigma_i \nabla h_i(\bar{x})\right\rbrace \triangleq D(\bar{x}),
	\end{equation}
	where the "$ + $" sign stands for the Minkowski sum of two sets.
\end{lemma}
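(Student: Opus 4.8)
The plan is to treat Problem~\ref{prob:inf_pen} as the minimization of the locally Lipschitz functional $J$ over the feasible set $F$ cut out by the smooth inequalities $h_j(x)\le 0$, and to invoke Clarke's nonsmooth multiplier rule (Theorem 1 of~\cite{clarke1976new}) directly in the Banach space $X=W_{1,\infty}[0,T]^n\times L_\infty[0,T]^m$. Because $\Psi$ and $g$ are continuously Fr\'{e}chet differentiable, $J=\Psi+\lambda\gamma(g)$ is locally Lipschitz, so the generalized differential $\partial J(\bar{x})$ in \eqref{eq:gen_diff} is a nonempty, convex, weak-$*$ compact subset of the dual of $X$, and the support-function identity \eqref{eq:gdd2} holds. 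These are exactly the structural properties Clarke's theorem requires of the objective.

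First I would apply Theorem 1 of~\cite{clarke1976new} at the local minimizer $\bar{x}$. In its general (Fritz--John) form this yields scalars $\sigma_0\ge 0$ and $\sigma_i\ge 0$, not all zero, satisfying complementary slackness $\sigma_i h_i(\bar{x})=0$ for every $i$, together with the inclusion $0\in\sigma_0\,\partial J(\bar{x})+\sum_{i}\sigma_i\,\partial h_i(\bar{x})$. Since each $h_i$ is Fr\'{e}chet differentiable, its generalized differential collapses to the singleton $\partial h_i(\bar{x})=\{\nabla h_i(\bar{x})\}$, so the constraint term is an ordinary (Minkowski) sum of the gradients $\sigma_i\nabla h_i(\bar{x})$.

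Next I would use complementary slackness to discard the inactive constraints: whenever $h_i(\bar{x})<0$ the condition $\sigma_i h_i(\bar{x})=0$ forces $\sigma_i=0$, so only indices $i\in I(\bar{x})$ survive and the sum runs over the active set. To pass from the Fritz--John inclusion to the normalized (KKT) form with $\sigma_0=1$, I would invoke a constraint qualification on the active inequality gradients $\{\nabla h_i(\bar{x}):i\in I(\bar{x})\}$ — the inequality part of the LICQ in Assumption~\ref{asup:LICQ}, equivalently the nonempty-interior (Slater-type) property of $F$ inherited from the compact convex sets with nonempty interior in Problem~\ref{prob:NCOCP} — to rule out $\sigma_0=0$. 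Dividing through by $\sigma_0$ then gives $0\in\partial J(\bar{x})+\{\,\sum_{i\in I(\bar{x})}\sigma_i\nabla h_i(\bar{x})\,\}=D(\bar{x})$, which is precisely \eqref{eq:stationary}.

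The main obstacle I anticipate is not the algebra but verifying Clarke's hypotheses in this infinite-dimensional setting. Specifically, the penalty $\gamma(g(x))=\esssup_{t\in[0,T]}\|g(x)(t)\|_1$ is a composition of an essential supremum over $[0,T]$ with a pointwise $L_1$ norm, so one must confirm that this double source of nonsmoothness still leaves $J$ locally Lipschitz with a well-behaved generalized differential $\partial J(\bar{x})$, and that the essential-supremum operation is handled correctly inside Clarke's calculus. This is exactly the \emph{minor modification to facilitate the essential supremum} already flagged in the proof of Theorem~\ref{thm:kkt}, and I would dispatch it here in the same manner.
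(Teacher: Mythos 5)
Your proposal is correct and follows essentially the same route as the paper, which simply invokes Theorem 1 of~\cite{clarke1976new} with no further elaboration; you have filled in the Fritz--John-to-KKT normalization, the collapse of $\partial h_i(\bar{x})$ to singleton gradients, and the complementary-slackness reduction to the active set, all of which the paper leaves implicit. The one caveat is that the lemma as stated carries no explicit constraint qualification, so your appeal to LICQ or a Slater-type condition to rule out $\sigma_0=0$ is an honest (and necessary) addition rather than a deviation.
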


Define the set of constrained stationary points of Problem~\ref{prob:inf_pen} as $$ S \coloneqq \{x\,|\,x\in F \text{ and } 0\in D(x)\}. $$ Lemma~\ref{lem:stationary} states that if $ \bar{x} $ solves Problem~\ref{prob:inf_pen}, then $ \bar{x}\in S $.

Next lemma gives an explicit expression of $ \partial J(x) $ for any $ x $ and directly follows Theorem 2.1 in~\cite{clarke1975generalized}.
\begin{lemma} \label{lem:differential}
	For any fixed positive weight $ \lambda $, the generalized differential of the penalized cost $ J(x) $ is
	\begin{equation*}
		\partial J(x)=\left\lbrace  \nabla \Psi(x)+\lambda\esssup\limits_{t\in[0,T]}\sum_{i=1}^{n}\mu_i \nabla g_i(x)\,|\,\mu_i \right\rbrace 
	\end{equation*}
	where $ \mu_i $ is defined by
	\begin{equation*}
		\mu_i \begin{cases}
		=\text{sgn }g_i(x), & \text{if } g_i(x)\neq 0; \\
		\in [-1,1], & \text{if } g_i(x)= 0.
		\end{cases}
	\end{equation*}
\end{lemma}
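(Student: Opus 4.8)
The plan is to build $\partial J(x)$ from Clarke's calculus of generalized gradients, exploiting the additive and compositional structure of $J(x) = \Psi(x) + \lambda\gamma(g(x))$. First I would invoke the sum rule. Since $\Psi$ is continuously Fr\'echet differentiable, it is strictly differentiable, so $\partial\Psi(x) = \{\nabla\Psi(x)\}$ and the Clarke sum rule holds with \emph{equality}: $\partial J(x) = \nabla\Psi(x) + \lambda\,\partial(\gamma\circ g)(x)$. This isolates the only nonsmooth ingredient, the composition $\gamma\circ g$.

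Next I would peel apart $\gamma\circ g$ via the chain rule. Because $g$ is continuously Fr\'echet differentiable, the Clarke chain rule applies, so that every element of $\partial(\gamma\circ g)(x)$ is obtained by composing a generalized gradient of $\gamma$ at the point $y = g(x)$ with the (linear) Fr\'echet derivative $Dg(x)$; the smoothness of $g$ again upgrades the generic inclusion to an equality. Thus the whole problem reduces to computing $\partial\gamma$ at $g(x)$.

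The heart of the argument is then $\partial\gamma$, where $\gamma(y) = \esssup_{t\in[0,T]}\|y(t)\|_1$ is itself a composition of the essential supremum over $t$ with the pointwise $\|\cdot\|_1$ norm on $\mathbb{R}^n$. Here is exactly where Theorem 2.1 of~\cite{clarke1975generalized} enters: it characterizes the generalized gradient of such a sup-functional as being supported on the \emph{active time set} where the essential supremum is attained, which produces the outer $\esssup_{t}$ in the stated formula. The inner pointwise norm contributes the sign multipliers: the subdifferential of $\|\cdot\|_1$ at a vector $z$ is precisely $\{\mu \mid \mu_i = \mathrm{sgn}\,z_i \text{ if } z_i\neq 0,\ \mu_i\in[-1,1] \text{ if } z_i=0\}$, which yields the case definition of $\mu_i$ with the coordinate $z_i$ replaced by $g_i(x)$. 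Composing with $Dg(x)$ converts each $\mu_i$ into the direction $\mu_i\nabla g_i(x)$, and assembling the two layers gives the claimed set $\{\nabla\Psi(x) + \lambda\esssup_t\sum_i\mu_i\nabla g_i(x)\}$. I would finish by noting that, since the $\mu_i$ on the kink set $\{g_i(x)=0\}$ range over the box $[-1,1]$, this set is automatically convex and closed, so no explicit convex hull need be appended.

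I expect the main obstacle to be the rigorous treatment of the essential supremum. The clean max-rule of finite-dimensional Clarke calculus must be extended to an $\esssup$ over the continuum $[0,T]$ in a Banach-space setting: one has to ensure measurability of the pointwise norm, confirm that the supremum is essentially attained on a set of positive measure, and verify the Clarke-regularity conditions under which the chain-rule and sup-rule inclusions become equalities rather than mere containments. Getting the active-time-set bookkeeping to line up with the author's $\esssup_t$ notation --- so that the outer essential supremum in the formula is interpreted as selection over the active times --- is the delicate accounting step that the citation to~\cite{clarke1975generalized} is meant to discharge.
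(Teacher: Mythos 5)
Your proposal is correct and follows essentially the same route as the paper: the paper offers no written proof for this lemma beyond the remark that it ``directly follows Theorem 2.1 in~\cite{clarke1975generalized},'' and your decomposition via the Clarke sum rule, the chain rule through the smooth map $g$, and the $\ell_1$/$\esssup$ subdifferential is precisely the calculation that citation is meant to discharge. You also correctly isolate the one genuinely delicate point --- making rigorous the generalized gradient of the essential supremum over $[0,T]$ in the $L_\infty$ dual, which the paper's $\esssup_t$ notation glosses over --- so your account is, if anything, more complete than the paper's.
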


Next theorem gives the main result in this subsection, that is, the exactness of the penalty problem. 

\begin{theorem} \label{thm:exactness}
	If $ \bar{x} $ is a stationary point of Problem~\ref{prob:inf_opt} with multipliers $ \bar{\mu}_i $, and $ \bar{\sigma}_i $, and if penalty weight $ \lambda $ satisfies
	\begin{equation} \label{eq:exactness}
		\lambda\geq |\bar{\mu}_i|, \quad \forall \; i=1,2,\ldots,n,
	\end{equation}
	then $ \bar{x} $ is a constrained stationary point of Problem~\ref{prob:inf_pen}, that is, $ \bar{x}\in S $.
	
	Conversely, if $ \bar{x}\in S $ is feasible for Problem~\ref{prob:inf_opt}, then it is a stationary point of Problem~\ref{prob:inf_opt}.
\end{theorem}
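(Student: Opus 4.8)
The plan is to prove the equivalence of stationary points in both directions by comparing the two stationarity characterizations: the KKT condition of Theorem~\ref{thm:kkt} for Problem~\ref{prob:inf_opt}, and the set-inclusion condition of Lemma~\ref{lem:stationary} for Problem~\ref{prob:inf_pen}, using the explicit form of $\partial J(x)$ supplied by Lemma~\ref{lem:differential}. The whole argument should reduce to showing that, at a feasible point $\bar{x}$, the condition $0 \in D(\bar{x})$ can be made to coincide with the existence of multipliers $\bar{\mu}_i, \bar{\sigma}_i$ satisfying the KKT stationarity equation, provided $\lambda$ dominates the $|\bar{\mu}_i|$.

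For the forward direction, I would start from the hypothesis that $\bar{x}$ is a stationary point of Problem~\ref{prob:inf_opt}, so Theorem~\ref{thm:kkt} furnishes $\bar{\mu}_i$ and $\bar{\sigma}_i \geq 0$ with $\nabla \Psi(\bar{x}) + \esssup_t \sum_i \bar{\mu}_i \nabla g_i(\bar{x}) + \sum_{i \in I(\bar{x})} \bar{\sigma}_i \nabla h_i(\bar{x}) = 0$. The key observation is that since $\bar{x}$ is feasible for Problem~\ref{prob:inf_opt}, we have $g_i(\bar{x}) = 0$ for all $i$, so Lemma~\ref{lem:differential} tells us that $\partial J(\bar{x})$ consists of all elements $\nabla \Psi(\bar{x}) + \lambda \esssup_t \sum_i \mu_i \nabla g_i(\bar{x})$ where each $\mu_i$ ranges freely over $[-1,1]$. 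I would then set $\mu_i = \bar{\mu}_i / \lambda$; the bound \eqref{eq:exactness} guarantees $|\mu_i| \leq 1$, so this is an admissible choice in $\partial J(\bar{x})$, and with it the corresponding element of $\partial J(\bar{x})$ equals $\nabla \Psi(\bar{x}) + \esssup_t \sum_i \bar{\mu}_i \nabla g_i(\bar{x})$. Adding the same $\bar{\sigma}_i$-weighted normal-cone term from the KKT condition then places $0$ in $D(\bar{x}) = \partial J(\bar{x}) + \{\sum_{i \in I(\bar{x})} \sigma_i \nabla h_i(\bar{x})\}$, which is exactly $\bar{x} \in S$.

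For the converse, I would assume $\bar{x} \in S$ and feasible, so $0 \in D(\bar{x})$ by definition of $S$, meaning there exist $\mu_i \in [-1,1]$ (again using feasibility $g_i(\bar{x})=0$ in Lemma~\ref{lem:differential}) and $\sigma_i \geq 0$ with $\nabla \Psi(\bar{x}) + \lambda \esssup_t \sum_i \mu_i \nabla g_i(\bar{x}) + \sum_{i \in I(\bar{x})} \sigma_i \nabla h_i(\bar{x}) = 0$. Setting $\bar{\mu}_i = \lambda \mu_i$ recovers precisely the KKT stationarity equation of Theorem~\ref{thm:kkt}, with the same $\bar{\sigma}_i$; since $\bar{x}$ is already assumed feasible, it satisfies the full first order necessary conditions and is therefore a stationary point of Problem~\ref{prob:inf_opt}.

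The main obstacle I anticipate is not the algebraic matching of multipliers, which is essentially a rescaling by $\lambda$, but rather justifying that the $\esssup_t$ operator appearing inside the generalized differential behaves linearly under this rescaling and commutes with the multiplier selection in the way the argument implicitly requires. In particular, Lemma~\ref{lem:differential} packages the $t$-dependence of the constraint multipliers into the essential supremum, and I would need to confirm that choosing $\mu_i(\cdot) = \bar{\mu}_i(\cdot)/\lambda$ pointwise (rather than a single scalar) is compatible with the set description of $\partial J(\bar{x})$, and that the feasibility assumption $g(\bar{x}) = 0$ genuinely frees every $\mu_i$ to lie in $[-1,1]$ almost everywhere. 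Handling this measure-theoretic bookkeeping carefully, while keeping the exactness threshold \eqref{eq:exactness} as the sole quantitative requirement on $\lambda$, is where the proof demands the most attention.
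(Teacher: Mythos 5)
Your proposal is correct and follows essentially the same route as the paper: both directions reduce to matching the multipliers of Theorem~\ref{thm:kkt} with those in Lemma~\ref{lem:differential} via the rescaling $\mu_i = \bar{\mu}_i/\lambda$, with feasibility supplying $g_i(\bar{x})=0$ so that $\mu_i$ ranges over $[-1,1]$. If anything, your explicit rescaling is stated more precisely than the paper's phrasing (which loosely claims $\lambda\mu_i$ can take any value in $\mathbb{R}$ rather than in $[-\lambda,\lambda]$), and your converse simply spells out what the paper dismisses as trivial.
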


\begin{proof}
	From Theorem~\ref{thm:kkt}, we have
	\begin{equation} \label{eq:kkt}
		\nabla \Psi(\bar{x}) + \esssup\limits_{t\in[0,T]}\sum_{i=1}^{n}\bar{\mu_i} \nabla g_i(\bar{x}) + \sum_{i\in I(\bar{x})}\bar{\sigma_i} \nabla h_i(\bar{x}) = 0,
	\end{equation}
	and $ \bar{\sigma_i}\geq 0 $. Since $ \bar{x} $ is feasible to Problem~\ref{prob:inf_opt}, we have $ g_i(\bar{x})= 0 $. Therefore from Lemma~\ref{lem:differential}, the generalized differential at $ \bar{x} $ is
	\begin{equation*}
		\partial J(\bar{x})=\left\lbrace  \nabla \Psi(\bar{x})+\lambda\esssup\limits_{t\in[0,T]}\sum_{i=1}^{n}\mu_i \nabla g_i(\bar{x})\,|\,\mu_i \right\rbrace,
	\end{equation*}
	where $ \mu_i \in [-1,1] $. Then from \eqref{eq:exactness}, $ \lambda\geq 1 $. Hence $ \lambda\mu_i $ can take any value in $ \mathbb{R} $, including $ \bar{\mu_i} $. In other words,
	\begin{equation*}
		\nabla \Psi(\bar{x}) + \esssup\limits_{t\in[0,T]}\sum_{i=1}^{n}\bar{\mu_i} \nabla g_i(\bar{x}) \in \partial J(\bar{x}).
	\end{equation*}
	Combining this with $ \bar{\sigma_i}\geq 0 $, from \eqref{eq:kkt} we have $ 0 \in D(\bar{x}) $, which has been defined in \eqref{eq:stationary}. Again, since $ \bar{x} $ is feasible, $ \bar{x} \in F $, i.e. $ \bar{x} \in S $.
	
	Converse result can be trivially derived from Lemma~\ref{lem:stationary} and Lemma~\ref{lem:differential}.
\end{proof}

\begin{remark}
	Although Theorem~\ref{thm:exactness} does not suggest a constructive way to find such a $ \lambda $, it still has important theoretical values. In our current implementations, we select a "sufficiently" large $ \lambda $ and keep it fixed for the whole process. Numerical results show that this works well for most applications.
	
	The "conversely" part of Theorem~\ref{thm:exactness} is what really matters to our subsequent convergence analysis. It guarantees that as long as we can find a stationary point for the penalty problem which is feasible to the original problem, we reach stationarity of the original problem as well.
\end{remark}

\subsection{Convergence Analysis}
Two major convergence results will be stated in this subsection. The first one deals with the case of finite convergence, while the second will handle the situation where an infinite sequence $ \{x^k \} $ is generated by the \texttt{SCvx} algorithm. Note that for simplicity, we suppress the dependence on control variables $ u $ and $ w $ in $ J $ and $ L $ respectively, i.e. $ x $ means $ (x,u) $ and $ d $ stands for $ (d,w) $. For the finite case, we have
\begin{theorem} \label{thm:finite_converge}
	The "predicted" changes $ \Delta L^k $ defined in \eqref{eq:predict} are nonnegative for any $ k $. Furthermore, $ \Delta L^k = 0 $ implies that $ x^{k} $ is a stationary point of Problem~\ref{prob:inf_pen}, i.e. $ x^{k} \in S $.
\end{theorem}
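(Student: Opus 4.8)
The plan is to exploit the fact that the \emph{null step} $(d,w)=(0,0)$ is always admissible for the convex subproblem \textbf{COCP}, and to read off stationarity from the subproblem's convex optimality conditions in the degenerate case where this null step turns out to be optimal. For the nonnegativity claim, I would first verify that $(d,w)=(0,0)$ is feasible for Problem~\ref{prob:COCP} at the current iterate $(x^k,u^k)$: the trust-region constraint \eqref{eq:tr} holds trivially since $\|0\|_\infty=0\le\varDelta^k$, and the set constraints $x^k+0\in X$, $u^k+0\in U$ hold because $(x^k,u^k)$ is an accepted (hence feasible) iterate. Substituting $d=w=0$ into the penalized linear dynamics \eqref{eq:dyn_pen} forces the virtual control to absorb exactly the linearization offset, so that $Ev$ equals the true defect $g(x^k)$ up to a sign that $\gamma$ ignores; the null step therefore attains subproblem cost $L(0,0)=\Psi(x^k)+\lambda\gamma(g(x^k))=J(x^k)$. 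Since $d^k$ globally minimizes the convex subproblem, $L(d^k)\le L(0,0)=J(x^k)$, and hence $\Delta L^k=J(x^k)-L(d^k)\ge 0$.

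For the second claim, suppose $\Delta L^k=0$, so that $L(d^k)=J(x^k)=L(0,0)$ and the null step is itself optimal for the convex subproblem. The key structural observation I would use is that the subproblem objective $L(\cdot)$ is precisely the first-order model of the penalized cost $J(\cdot)$ about $x^k$: expanding $g(x^k+d,u^k+w)$ to first order and comparing with \eqref{eq:dyn_pen} shows that $Ev$ is exactly the linearized defect, so the directional derivative of $d\mapsto L(d)$ at $0$ coincides with the generalized directional derivative $dJ(x^k,\cdot)$ of Definition~\ref{def:gdd}, whose associated generalized differential is given explicitly by Lemma~\ref{lem:differential}. I would then write the convex first-order optimality condition for \textbf{COCP} at $(0,0)$: the constraints $x^k+d\in X$, $u^k+w\in U$ contribute a normal-cone/multiplier term $\sum_{i\in I(x^k)}\sigma_i\nabla h_i(x^k)$ with $\sigma_i\ge0$. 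A crucial point is that the trust region is \emph{inactive}: since $\varDelta^k\ge\varDelta_l>0$ while the minimizer sits at $\|0\|_\infty=0$, the null step lies strictly inside the trust region, contributes no multiplier, and the optimality is a genuine stationarity statement rather than an artifact of a shrinking region.

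Combining these, the optimality of $(0,0)$ for the subproblem reads $0\in\partial J(x^k)+\{\,y\mid y=\sum_{i\in I(x^k)}\sigma_i\nabla h_i(x^k)\,\}=D(x^k)$, which is exactly condition \eqref{eq:stationary} of Lemma~\ref{lem:stationary}; together with $x^k\in F$ this yields $x^k\in S$. I expect the main obstacle to be the rigorous identification, in the nonsmooth setting, of the subproblem's directional-derivative optimality condition with the generalized differential $\partial J(x^k)$ of Lemma~\ref{lem:differential} --- in particular handling the $\esssup$ and the subdifferential of the $L_1$-based penalty $\gamma$, and verifying that carrying the minimization through the free, undirectionally-penalized virtual control $v$ reproduces the multiplier rule of Lemma~\ref{lem:differential} at the defect $g(x^k)$. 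Establishing the inactivity of the trust region via the lower bound $\varDelta_l$ is precisely what makes this identification legitimate.
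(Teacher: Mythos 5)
Your proposal is correct and follows essentially the same route as the paper: feasibility of the null step $d=0$ with $L(0)=J(x^k)$ gives $\Delta L^k\ge 0$, and when $\Delta L^k=0$ the optimality of the null step is converted into $0\in D(x^k)$ via Lemma~\ref{lem:stationary} and Lemma~\ref{lem:differential}. The paper's proof is only a two-line sketch that says to ``directly apply'' those lemmas; the details you supply --- the virtual control absorbing the defect so that $L(0)=J(x^k)$, the identification of the subproblem's first-order model with $\partial J(x^k)$, and the inactivity of the trust-region constraint at the null step --- are exactly the steps the paper leaves implicit.
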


\begin{proof}
	$ d=0 $ is always feasible for the convex subproblem, Problem~\ref{prob:COCP}, while $ d^{k} $ solves this problem. Hence we have
	\begin{equation*}
		L(d^k) \leq L(0) = J(x^k),
	\end{equation*}
	so that $ \Delta L^k=J(x^k)-L(d^k)\geq 0 $, and $ \Delta L^k=0 $ if and only if $ d=0 $ solves problem~\ref{prob:COCP}. In the $ \Delta L^k=0 $ case, one can directly apply Lemma~\ref{lem:stationary} and Lemma~\ref{lem:differential} to get $ x^k \in S $.
\end{proof}

Now the real challenge is when $ \{x^k \} $ is an infinite sequence. It would be nice to have some smoothness when examining the limit process. However in this case, the non-differentiable penalty function $ \gamma(\cdot) $ poses difficulties to our analysis. To facilitate further results, we first note that for any $ x $,
\begin{equation}
	J(x+d) = L(d) + o(\|d\|) \label{eq:taylor}
\end{equation}
and $ o(\|d\|) $ is independent of $ x $. This can be verified by simply writing out the Taylor expansion of $ J(x) $ and using the fact that $ \gamma(\cdot) $ is a linear operator.

Next lemma is the major preliminary result, and its proof also provides some geometric insights of the \texttt{SCvx} algorithm we described. 

\begin{lemma} \label{lem:main}
	Let $ \bar{x} \in F $ but $ \bar{x} \notin S $, i,e, $ \bar{x} $ is feasible but not a stationary point of the penalty problem, and any scalar $ c\leq 1 $. Then there exist positive $ \bar{\varDelta} $ and $ \bar{\epsilon} $ such that for all $ x \in N(\bar{x},\bar{\epsilon})\cap F $ and $ 0<\varDelta\leq \bar{\varDelta} $, any optimal solution $ \bar{d} $ of the convex subproblem, Problem~\ref{prob:COCP} solved at $ x $ with trust region radius $ \varDelta $ satisfies
	\begin{equation}
		r(x,\varDelta)=\frac{J(x)-J(x+\bar{d})}{J(x)-L(\bar{d})} \geq c,
	\end{equation}
	where $ N(\bar{x},\bar{\epsilon}) $ represents an open neighborhood of $ \bar{x} $ with radius $ \bar{\epsilon} $.
\end{lemma}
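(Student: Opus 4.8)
The plan is to measure the gap between the actual and predicted reductions with the first-order identity \eqref{eq:taylor}, and then to dominate that gap by a lower bound on the predicted reduction that is linear in the trust-region radius. Using $J(x+\bar{d}) = L(\bar{d}) + o(\|\bar{d}\|)$ from \eqref{eq:taylor}, I would first rewrite the ratio as $r(x,\varDelta) = 1 - \frac{J(x+\bar{d})-L(\bar{d})}{J(x)-L(\bar{d})} = 1 - \frac{o(\|\bar{d}\|)}{\Delta L}$, so that $r \geq c$ becomes equivalent to $o(\|\bar{d}\|)/\Delta L \leq 1-c$. The whole argument then reduces to producing, uniformly over $x \in N(\bar{x},\bar{\epsilon}) \cap F$, an upper bound $\|\bar{d}\| \leq M\varDelta$ and a lower bound $\Delta L \geq \kappa\varDelta$ with constants $M,\kappa > 0$ independent of $x$ and $\varDelta$; together these force the quotient to behave like $o(\varDelta)/\varDelta$, which vanishes as $\varDelta \to 0^+$.

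The heart of the proof is the lower bound $\Delta L \geq \kappa\varDelta$, which I would obtain from a feasible descent direction. Since $\bar{x} \in F$ but $\bar{x} \notin S$, the stationarity inclusion of Lemma~\ref{lem:stationary} fails, i.e. $0 \notin D(\bar{x})$. Separating the origin from the closed convex set $D(\bar{x})$ and invoking the characterization \eqref{eq:gdd2} of the generalized directional derivative yields a direction $s$ that is feasible for the active inequality constraints (it lies in the tangent cone of $F$ at $\bar{x}$, so $\langle \nabla h_i(\bar{x}), s\rangle \leq 0$ for $i \in I(\bar{x})$) and that is strictly decreasing for $J$, namely $dJ(\bar{x}, s) = -\delta < 0$. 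Because $L$ is the convex first-order model of $J$ at the current point, its one-sided directional derivative at $\bar{d}=0$ agrees with $dJ(x,\cdot)$, and by upper semicontinuity of the generalized directional derivative this decrease persists on a neighborhood: after shrinking $\bar{\epsilon}$, one has $dJ(x,s) \leq -\delta/2$ for every $x \in N(\bar{x},\bar{\epsilon})$.

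With the descent direction in hand, I would test the convex subproblem against the feasible step $\tau s$, where $\tau$ is chosen so that its control component saturates the trust region \eqref{eq:tr}, i.e. $\tau \asymp \varDelta$; the virtual control absorbs the linearized dynamics, so feasibility of $\tau s$ for Problem~\ref{prob:COCP} reduces to the trust region and the inequality constraints, both met for small $\tau$ since $s$ is a feasible direction. Optimality of $\bar{d}$ then gives $\Delta L = L(0) - L(\bar{d}) \geq L(0) - L(\tau s) \geq \frac{\delta}{4}\tau \geq \kappa\varDelta$ for all sufficiently small $\varDelta$. The companion bound $\|\bar{d}\| \leq M\varDelta$ follows from the linear penalized dynamics \eqref{eq:dyn_pen}: the initial condition $d(0)=0$, the trust-region cap $\|w\|_\infty \leq \varDelta$, the boundedness of $A(\cdot), B(\cdot)$ on the compact set $X \times U$, and the control of the virtual control near feasibility let Gronwall's inequality bound $\|\bar{d}\|_{1,\infty}$ by a constant multiple of $\varDelta$.

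Finally I would assemble the pieces. Since the remainder $o(\|d\|)$ in \eqref{eq:taylor} is independent of $x$, combining $\|\bar{d}\| \leq M\varDelta$ with $\Delta L \geq \kappa\varDelta$ gives $\frac{o(\|\bar{d}\|)}{\Delta L} \leq \frac{o(M\varDelta)}{\kappa\varDelta}$, whose right-hand side tends to $0$ as $\varDelta \to 0^+$; choosing $\bar{\varDelta}$ small enough that it stays below $1-c$ on $(0,\bar{\varDelta}]$ yields $r(x,\varDelta) \geq c$ and closes the argument. I expect the main obstacle to be the uniform lower bound $\Delta L \geq \kappa\varDelta$: both extracting a genuine feasible descent direction from the non-stationarity condition in the presence of the nonsmooth penalty $\gamma(g(x))$ and the active inequality constraints, and --- more delicately --- guaranteeing that the induced decrease is uniform over the entire neighborhood $N(\bar{x},\bar{\epsilon})$ rather than only at $\bar{x}$. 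The stated $x$-independence of the remainder in \eqref{eq:taylor} is exactly what makes this uniformization go through.
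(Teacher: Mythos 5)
Your proposal is correct and follows essentially the same route as the paper's own proof: separate the origin from the closed convex set $D(\bar{x})$ to obtain a unit direction $s$ with $dJ(\bar{x},s)\leq-\kappa$ that is feasible for the active constraints, use $\varDelta s$ as a competitor in the convex subproblem to get the uniform lower bound $\Delta L \gtrsim (\kappa/2)\varDelta$, and combine with the $x$-independent first-order identity \eqref{eq:taylor} to conclude $r(x,\varDelta)\to 1$ as $\varDelta\to 0^+$. Your Gronwall argument for $\|\bar{d}\|\leq M\varDelta$ is a welcome extra detail that the paper leaves implicit when it writes the remainder as $o(\|\varDelta\|)$.
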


\begin{proof}
	Since $ \bar{x} \in F $ but $ \bar{x} \notin S $, we know that $ 0 \notin D(\bar{x}) $, where $ D(\bar{x}) $ is defined in \eqref{eq:stationary}.
	
	By definition in \eqref{eq:gen_diff}, the generalized differential $ \partial J(\bar{x}) $ is the intersection of half spaces, and hence it is a closed convex set. Also, the set
	\begin{equation} \label{eq:conic}
		\left\lbrace y\,|\,y=\sum_{i\in I(\bar{x})}\sigma_i \nabla h_i(\bar{x}), \; \text{for some } \sigma_i \geq 0 \right\rbrace \triangleq C(\bar{x})
	\end{equation}
	is a conic combination of $ \nabla h_i(\bar{x}) $, hence it is a closed convex cone. Since Minkowski sum preserves convexity, $ D(\bar{x}) $ is a closed and convex set as well. Applying the separation theorem of convex sets to $ D(\bar{x}) $, we get that there exists a unit vector $ s $ and a scalar $ \kappa > 0 $ such that for all $ \nu\in D(\bar{x}) $,
	\begin{equation}
		 \langle\nu, s\rangle \leq -\kappa <0. \label{eq:separation}
	\end{equation}
	Since $ \partial J(\bar{x}) $ is a subset of $ D(\bar{x}) $, \eqref{eq:separation} holds for all $ \nu\in\partial J(\bar{x}) $, which means $ \max \{\langle\nu, s\rangle\,|\,\nu\in \partial J(\bar{x})\} \leq -\kappa $. The left hand side is exactly the expression for GDD as described in \eqref{eq:gdd2}. Therefore, we have
	\begin{equation*}
		dJ(\bar{x},s) \coloneqq \limsup\limits_{\mathclap{\substack{x\rightarrow \bar{x} \\ \varDelta\rightarrow 0^+}}} \frac{J(x+\varDelta s)-J(x)}{\varDelta} \leq -\kappa.
	\end{equation*}
	It implies that there exist positive $ \bar{\varDelta} $ and $ \bar{\epsilon} $ such that for all $ x \in N(\bar{x},\bar{\epsilon})\cap F $ and $ 0<\varDelta\leq \bar{\varDelta} $,
	\begin{equation} \label{eq:limit_delta}
		\frac{J(x+\varDelta s)-J(x)}{\varDelta} < -\frac{\kappa}{2}.
	\end{equation}
	Another subset of $ D(\bar{x}) $ is $ C(\bar{x}) $ defined in \eqref{eq:conic}, so \eqref{eq:separation} also holds for all $ \nu \in C(\bar{x}) $. This necessarily implies
	\begin{equation} \label{eq:feasible_s}
		\langle\nabla h_i(\bar{x}), s\rangle \leq 0, \quad i \in I(\bar{x}),
	\end{equation}
	which essentially means $ s $ is a feasible direction of the active constraints at $ \bar{x} $.
	
	Now consider Problem~\ref{prob:COCP} is solved with such $ x $ and $ \varDelta $, and gives an optimal solution $ \bar{d} $. By using \eqref{eq:taylor}, the "actual" change in $ J $ is
	\begin{align}
		\Delta J(x,\bar{d}) &=J(x)-J(x+\bar{d}) \nonumber \\
		&=J(x)-L(\bar{d})-o(\|\bar{d}\|) \nonumber \\
		&=\Delta L(\bar{d})-o(\|\varDelta\|). \label{eq:delta_j}
	\end{align}
	Thus the ratio
	\begin{equation*}
		r(x,\varDelta)=\frac{\Delta J(x,\bar{d})}{\Delta L(\bar{d})} = 1-\frac{o(\|\varDelta\|)}{\Delta L(\bar{d})}.
	\end{equation*}
	 Let $ d' = \varDelta s $. Since $ s $ is a feasible direction as implied by \eqref{eq:feasible_s}, $ d' $ will be a feasible solution to Problem~\ref{prob:COCP}, while $ \bar{d} $ is optimal. Thus we have $ L(\bar{d})\leq L(d') $, which in turn means
	 \begin{equation}
	 	\Delta L(\bar{d}) \geq \Delta L(d') \label{eq:delta_l}
	 \end{equation}
	 
	 Now in the $ \varDelta \rightarrow 0 $ process, $ 0<\varDelta\leq \bar{\varDelta} $ will be satisfied. Then from \eqref{eq:limit_delta}, we have
	 \begin{equation*}
	 	\Delta J(x,d')=J(x)-J(x+d') > \left( \frac{\kappa}{2} \right) \varDelta.
	 \end{equation*}
	 With $ \bar{d} $ replaced by $ d' $ in \eqref{eq:delta_j},
	 \begin{equation*}
	 	\Delta J(x,d')=\Delta L(d')-o(\|\varDelta\|) \geq \left( \frac{\kappa}{2} \right) \varDelta.
	 \end{equation*}
	 Combining this with \eqref{eq:delta_l}, we get
	 \begin{equation*}
	 	\Delta L(x,\bar{d}) \geq \Delta L(d')-o(\|\varDelta\|) > \left( \frac{\kappa}{2} \right) \varDelta.
	 \end{equation*}
	 Thus $ \Delta L(x,\bar{d}) > ( \kappa/2 ) \varDelta + o(\|\varDelta\|) $, and the ratio
	 \begin{equation*}
	 	r(x,\varDelta) = 1-\frac{o(\|\varDelta\|)}{\Delta L(\bar{d})} > 1-\frac{o(\|\varDelta\|)}{( \kappa/2 ) \varDelta + o(\|\varDelta\|)}.
	 \end{equation*}
	 Therefore, as $ \varDelta \rightarrow 0 $, $ r(x,\varDelta) \rightarrow 1 $, and thus for any $ c < 1 $, $ r(x,\varDelta) \geq c $.
\end{proof}

\begin{remark}
	A frustrating situation happens when the algorithm keeps rejecting our trial steps. Fortunately, Lemma~\ref{lem:main} provides some assurance that the \texttt{SCvx} algorithm won't reject the trial steps forever. At some point, usually after reducing $ \varDelta^{k} $ several times, the ratio metric $ r^{k} $ will be greater than any prescribed $ c $, hence $ r^k \geq \rho_{0} $ is guaranteed.
\end{remark}

We can now  present our main result in this section.

\begin{theorem}
	 If the \texttt{SCvx} algorithm generates an infinite sequence $ \{ x^k \} $, then $ \{ x^k \} $ has limit points, and any limit point $ \bar{x} $ is a constrained stationary point of Problem~\ref{prob:inf_pen}, i.e. $ \bar{x} \in S $.
\end{theorem}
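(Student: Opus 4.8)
The plan is to follow the classical trust-region convergence template, adapted to the nonsmooth penalized cost $J$ and leaning entirely on Theorem~\ref{thm:finite_converge} and Lemma~\ref{lem:main}. There are three things to establish: that $\{x^k\}$ admits limit points at all, that the predicted reductions $\Delta L^k$ drive to zero along the accepted iterates, and that any limit point must lie in $S$. First I would note that every iterate is feasible: the constraints $x^k+d\in X$ and $u^k+w\in U$ in Problem~\ref{prob:COCP}, together with $x^1\in X$ and $u^1\in U$, force $x^k\in F$ for all $k$. Since the original constraint sets are compact, the sequence lives in a bounded, closed feasible region, so it possesses at least one limit point $\bar{x}$, and $\bar{x}\in F$ because $F$ is closed. (In the infinite-dimensional function-space formulation this compactness step is the one place where the appropriate topology must be invoked with care; in the finite-dimensional, discretized reading it is simply Bolzano--Weierstrass.)

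Next I would show $\Delta L^k\to 0$. Because the sequence is infinite, the algorithm never triggers the stopping rule, so $\Delta L^k>0$ for every $k$, and every succession terminates with an \emph{accepted} step (the inner rejection loop being finite by the Remark following Lemma~\ref{lem:main}). For an accepted step $r^k\ge\rho_0$, hence
\[
J(x^k)-J(x^{k+1})=\Delta J^k=r^k\,\Delta L^k\ge \rho_0\,\Delta L^k\ge 0 .
\]
Thus $\{J(x^k)\}$ is nonincreasing. Moreover $J=\Psi+\lambda\gamma(g)\ge\Psi$ is bounded below on the compact feasible region by continuity of $\Psi$, so telescoping the displayed inequality yields $\sum_k \rho_0\,\Delta L^k\le J(x^1)-\inf_F J<\infty$. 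Therefore $\Delta L^k\to 0$, and in particular $\Delta L^{k_j}\to 0$ along any convergent subsequence $x^{k_j}\to\bar{x}$.

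Finally I would argue by contradiction that $\bar{x}\in S$. Suppose instead $\bar{x}\in F$ but $\bar{x}\notin S$. Fix any $c\in(\rho_0,1)$ and invoke Lemma~\ref{lem:main} to obtain $\bar{\varDelta},\bar{\epsilon}>0$ so that every subproblem solved at $x\in N(\bar{x},\bar{\epsilon})\cap F$ with radius $\varDelta\le\bar{\varDelta}$ is accepted and, by the estimate established inside that proof, satisfies $\Delta L(x,\varDelta)\ge(\kappa/2)\varDelta+o(\varDelta)$. Two observations then close the argument. First, the accepted radius at each succession is bounded below by a fixed $\varDelta_*=\min\{\varDelta_l,\bar{\varDelta}/\alpha\}>0$: once $\varDelta\le\bar{\varDelta}$ the step is accepted by Lemma~\ref{lem:main}, so rejection never occurs at a radius below $\bar{\varDelta}$, while the maintained lower bound $\varDelta_l$ covers the case of immediate acceptance. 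Second, $\Delta L(x,\varDelta)$ is nondecreasing in $\varDelta$, since enlarging the trust region only enlarges the feasible set of Problem~\ref{prob:COCP} and hence can only lower its optimal value $L(\bar{d})$. Choosing a fixed $\hat{\varDelta}\le\min\{\bar{\varDelta},\varDelta_*\}$ small enough that $(\kappa/2)\hat{\varDelta}+o(\hat{\varDelta})\ge(\kappa/4)\hat{\varDelta}$, monotonicity gives, for all large $j$ so that $x^{k_j}\in N(\bar{x},\bar{\epsilon})$,
\[
\Delta L^{k_j}=\Delta L(x^{k_j},\varDelta^{\mathrm{acc}}_{k_j})\ge \Delta L(x^{k_j},\hat{\varDelta})\ge \tfrac{\kappa}{4}\,\hat{\varDelta}>0 .
\]
This contradicts $\Delta L^{k_j}\to 0$, so $\bar{x}\in S$.

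The main obstacle is the third paragraph, and specifically the bookkeeping that pins the \emph{accepted} radius away from zero: the trust region may be contracted several times within a single succession, so one must argue that contraction never proceeds past $\bar{\varDelta}$ and then transfer this lower bound on $\varDelta$ into a lower bound on $\Delta L^{k_j}$ via monotonicity together with the descent estimate of Lemma~\ref{lem:main}. A secondary and more foundational difficulty is justifying the existence of limit points rigorously in the Banach-space formulation, where norm-boundedness does not yield compactness and the correct (e.g.\ weak-$*$) notion of limit point must be used to keep the continuity and feasibility arguments valid.
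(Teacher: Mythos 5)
Your proof is correct and follows essentially the same route as the paper's: argue by contradiction via Lemma~\ref{lem:main}, bound the accepted trust-region radius below by $\min\{\varDelta_l,\bar{\varDelta}/\alpha\}$, derive a uniform positive lower bound on $\Delta L^{k_j}$ for large $j$, and contradict the summability of the telescoping decreases of the monotone, bounded-below sequence $\{J(x^k)\}$. The only substantive difference is in how the lower bound on $\Delta L^{k_j}$ is obtained: the paper solves an auxiliary subproblem at $\bar{x}$ with radius $\delta/2$, sets $\theta=\Delta L(\bar{d})>0$ (positive by Theorem~\ref{thm:finite_converge} since $\bar{x}\notin S$), and transfers $\theta/2$ to nearby iterates by continuity and feasibility of $\hat{x}-x^{k_i}$, whereas you reuse the quantitative descent estimate $\Delta L\geq(\kappa/2)\varDelta+o(\varDelta)$ from inside Lemma~\ref{lem:main}'s proof together with monotonicity of the predicted reduction in $\varDelta$ --- both are valid, and your explicit acknowledgment of the compactness/limit-point subtlety in the Banach-space setting is a point the paper glosses over.
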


\begin{proof}
The proof is by contradiction. Since we have assumed the feasible region to be convex and compact, there is at least one subsequence $ \{ x^{k_{i}} \} \rightarrow \bar{x} $, which is NOT a stationary point. From Lemma~\ref{lem:main},  there exist positive $ \bar{\varDelta} $ and $ \bar{\epsilon} $ such that
	\begin{equation*}
	r(x,\varDelta) \geq \rho_0 \quad \text{for all } x \in N(\bar{x},\bar{\epsilon})\cap F \text{ and } 0<\varDelta\leq \bar{\varDelta}.
	\end{equation*}
	Without loss of generality, we can suppose the whole subsequence $ \{ x^{k_{i}} \} $ is in $ N(\bar{x},\bar{\epsilon}) $, so
	\begin{equation} \label{eq:accept}
	r(x^{k_{i}},\varDelta) \geq \rho_0 \quad \text{for all } 0<\varDelta\leq \bar{\varDelta}.
	\end{equation}
	
	If the initial trust region radius is less than $ \bar{\varDelta} $, then \eqref{eq:accept} will be trivially satisfied.
	
	Now if the initial radius is greater than $ \bar{\varDelta} $, then $ \varDelta $ may need to be reduced several times  before the condition \eqref{eq:accept} is met. Let $ \hat{\varDelta} $ be the last $ \varDelta^{k_{i}} $ that needs to be reduced, then evidently $ \hat{\varDelta} > \bar{\varDelta} $. Also, let $ \varDelta^{k_{i}} $ be the subsequence consists of all the valid radius after the last rejection. Then
	\begin{equation*}
		\varDelta^{k_{i}} = \hat{\varDelta}/\alpha >  \bar{\varDelta}/\alpha.
	\end{equation*}
	Since there is also a lower bound $ \varDelta_{l} $ on $ \varDelta $, we have
	\begin{equation} \label{min_delta}
		\varDelta^{k_{i}} \geq \min \{\varDelta_{l}, \bar{\varDelta}/\alpha\} \triangleq \delta.
	\end{equation}
	Note that condition \eqref{eq:accept} can be express as
	\begin{equation} \label{eq:r_alter}
		J(x^{k_{i}})-J(x^{k_{i}+1}) \geq \rho_{0} \Delta L^{k_{i}}.
	\end{equation}
	
	Our next goal is to find a lower bound for $ \Delta L^{k_{i}} $. Further, we let $ \bar{d} $ solves the convex subproblem, Problem~\ref{prob:COCP} at $ \bar{x} $ with trust region radius $ \delta/2 $, i.e. $ \|\bar{d}\|_\infty \leq \delta/2 $. Let $ \hat{x}=\bar{x}+\bar{d} $, then
	\begin{equation} \label{eq:norm1}
		\|\hat{x}-\bar{x}\|_\infty \leq \delta/2.
	\end{equation}
	Since $ \bar{x}\notin S $, Theorem~\ref{thm:finite_converge} implies
	\begin{equation*}
		\Delta L(\bar{d})=J(\bar{x}) - L(\bar{d}) \triangleq \theta > 0.
	\end{equation*}
	By continuity of $ J $ and $ L $, there exists an $ i_0 > 0 $ such that for all $ i \geq i_0 $
	\begin{equation} \label{eq:r_continuity}
		J(x^{k_{i}}) - L(\hat{x}-x^{k_{i}}) > \theta/2, \text{ and}
	\end{equation}
	\begin{equation} \label{eq:norm2}
		\|x^{k_{i}} - \bar{x}\|_\infty < \delta/2.
	\end{equation}
	From \eqref{eq:norm1} and \eqref{eq:norm2}, we have for all $ i \geq i_0 $
	\begin{equation} \label{eq:tr_ki}
		\|\hat{x}-x^{k_{i}}\|_\infty \leq \|\hat{x}-\bar{x}\|_\infty + \|x^{k_{i}} - \bar{x}\|_\infty < \delta \leq \varDelta^{k_{i}}.
	\end{equation}
	where the last inequality comes from \eqref{min_delta}. Let $ \hat{d}^{k_{i}}=\hat{x}-x^{k_{i}} $, \eqref{eq:tr_ki} implies $ \hat{d}^{k_{i}} $ is a feasible solution for the convex subproblem at $ (x^{k_{i}},\varDelta^{k_{i}}) $ when $ i \geq i_0 $. Then if $ d^{k_{i}} $ is the optimal solution to this subproblem, we have $ L(d^{k_{i}}) \leq L(\hat{d}^{k_{i}}) $, so
	\begin{equation} \label{eq:theta/2}
		\Delta L(d^{k_{i}}) = J(x^{k_{i}}) - L(d^{k_{i}}) \geq J(x^{k_{i}}) - L(\hat{d}^{k_{i}}) > \theta/2.
	\end{equation}
	The last inequality is due to \eqref{eq:r_continuity}. Combine \eqref{eq:r_alter} and \eqref{eq:theta/2}, we have for all $ i \geq i_0 $
	\begin{equation} \label{eq:last_theta/2}
		J(x^{k_{i}})-J(x^{k_{i}+1}) \geq \rho_{0} \theta/2.
	\end{equation}
	However, the positive infinite series
	\begin{align*}
		\sum_{i=1}^{\infty} \left( J(x^{k_{i}})-J(x^{k_{i}+1}) \right) &\leq 
		\sum_{i=1}^{\infty} \left( J(x^{k_{i}})-J(x^{k_{i+1}}) \right) \\
		&=J(x^{k_{1}})-J(\bar(x)) \leq \infty.
	\end{align*}
	Thus it is convergent, so necessarily $$ J(x^{k_{i}})-J(x^{k_{i}+1}) \;\rightarrow\; 0, $$
	which contradicts \eqref{eq:last_theta/2}. Therefore, we conclude that every limit point $ \bar{x} \in S$.
\end{proof}

\section{NUMERICAL SIMULATIONS}

This section presents a numerical example to demonstrate the proposed algorithm's capabilities. The example consists of a mass subject to ``double-integrator" dynamics with nonlinear aerodynamic drag. The mass is controlled using a thrust input, $\mathbf{T}$, that is limited to a maximum magnitude of $T_{max}$. The problem formulation is summarized below, and its parameters  are given in Table~\ref{t:sim_params}.
\begin{align*}
\text{minimize} & \int_0^{t_f}{\Gamma(\tau)d\tau} \quad \text{s.t.} \\
\mathbf{x}(0) &= \mathbf{x}_i \quad\quad\,\ \mathbf{v}(0) = \mathbf{v}_i \\
\mathbf{x}(t_f) &= \mathbf{x}_f \quad\quad \mathbf{v}(t_f) = \mathbf{v}_f \\
\dot{\mathbf{x}}(t) &= \mathbf{v}(t) \quad\;\;\; \dot{\mathbf{v}}(t) = \mathbf{a}(t) \\
\mathbf{a}(t) &= \frac{1}{m}\left(\mathbf{T}(t)-k_d||\mathbf{v}(t)||\mathbf{v}(t)\right) \\
||\mathbf{T}(t)|| &\le \Gamma(t) \le T_{max}.
\end{align*}

\begin{table}[h!]
	\begin{center}
		\caption{Simulation Parameters}
		\begin{tabular}{lll}
			\hhline{===}
			Parameter         & Value \\
			\hline
			$t_f$             & 10 \\
			$m$               & 1.0 \\
			$k_d$             & 0.25 \\
			$T_{max}$         & 2.0 \\
			$\mathbf{x}_i$    & $[0\quad0]^T$ \\
			$\mathbf{x}_f$    & $[10\quad10]^T$ \\
			$\mathbf{v}_i$    & $[5\quad0]^T$ \\
			$\mathbf{v}_f$    & $[5\quad0]^T$ \\
			$\Delta_l$        & $0.0$ \\
			$\rho_0$          & $0.0$ \\
			$\rho_1$          & $0.25$ \\
			$\rho_2$          & $0.9$ \\
			$\alpha$          & $2.0$ \\
			\hhline{===}
			\label{t:sim_params}
		\end{tabular}
	\end{center}
	\vspace{-20pt}
\end{table}
Here, we linearize the first succession about a constant velocity straight-line trajectory from the initial  to the final position. The convergence history is presented in Fig.~\ref{f:sim_position}, where the first 10 accepted successions are shown, after which a desirable convergence is achieved. 
\begin{figure}[h!]
	\begin{center}
		\includegraphics[width=0.45\textwidth]{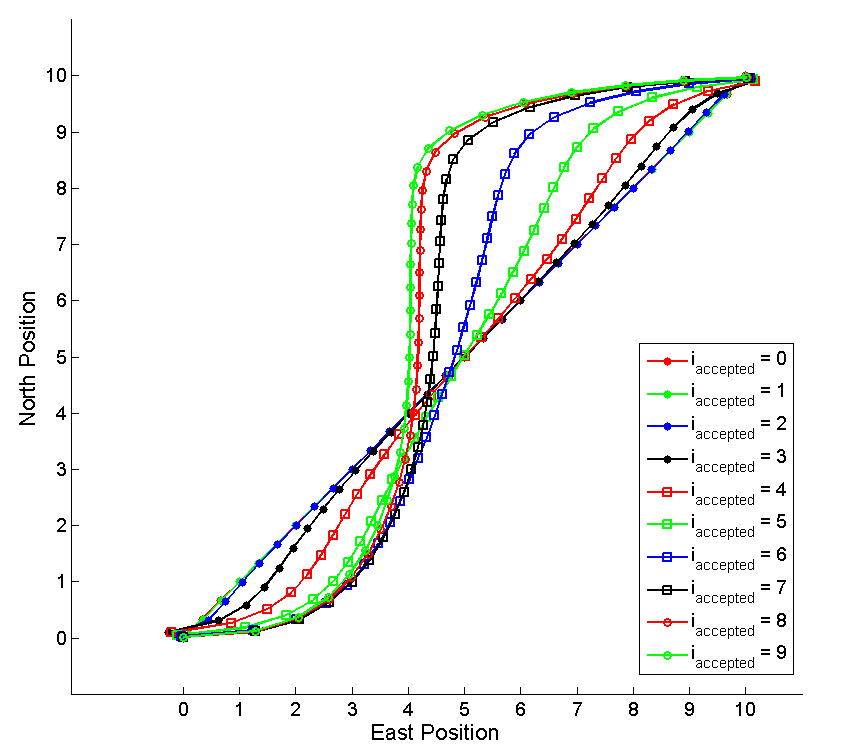}
		\caption{Convergence history of two-dimensional trajectory. \fontseries{sb}\it The dots along the trajectory indicate time-discretization points.}
		\label{f:sim_position}
	\end{center}
	\vspace{-14pt}
\end{figure}
 Fig.~\ref{f:sim_conv_thrust} shows the converged thrust magnitude profile. The plot also shows a no-drag thrust magnitude profile for the same scenario. Note that the no-drag case renders the problem convex, thus recovering the classical ``bang-bang'' solution. In contrast, the case with aerodynamic drag takes advantage of drag at the beginning by decreasing its thrust magnitude, and must sustain a non-zero thrust in the middle of the trajectory to counter the energy loss induced by drag. Note that the area under the thrust curve in the case with drag is less than that of the case without drag. Thus, we see evidence of a situation where the optimal cost of the problem is reduced due to the inclusion of nonlinear dynamics.

\begin{figure}[h!]
	\begin{center}
		\includegraphics[width=0.45\textwidth]{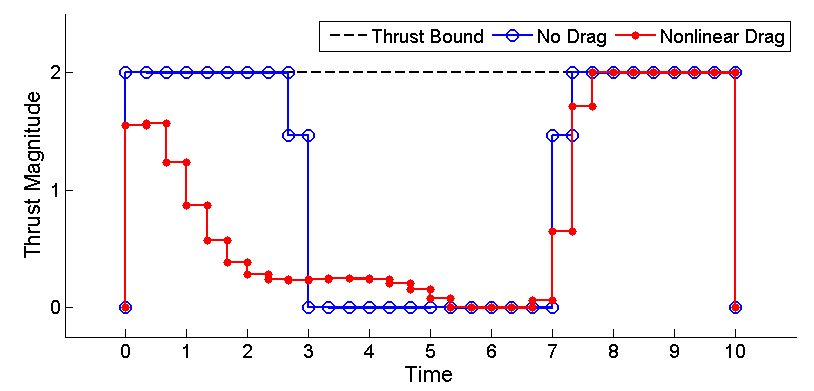}
		\caption{Convergence history of solution and thrust profile: \fontseries{sb}\it The thrust magnitude profile of the converged solution (red) and of the corresponding no-drag problem (blue). 
		The markers indicate time-discretization points.}
		\label{f:sim_conv_thrust}
	\end{center}
	\vspace{-14pt}
\end{figure}

Fig.~\ref{f:sim_algorithm} show the process the algorithm undergoes in obtaining the solution. The top plot shows how $r^k$ varies as a function of iteration number, and how the trial step is rejected when $r^k$ drops below $\rho_0$. Note that the trust region radius increases whenever $r^k$ is greater than $\rho_2$, stays the same when $\rho_1 \le r^k \le \rho_2$ or $r^k \le \rho_0$, and decreases when $\rho_0 \le r^k \le \rho_1$. More  importantly, we can see how the process of rejecting the step results in decreasing the trust region radius, $\Delta^k$, thus increasing $r^k$ back above $\rho_0$.

\begin{figure}[h!]
	\begin{center}
		\includegraphics[width=0.45\textwidth]{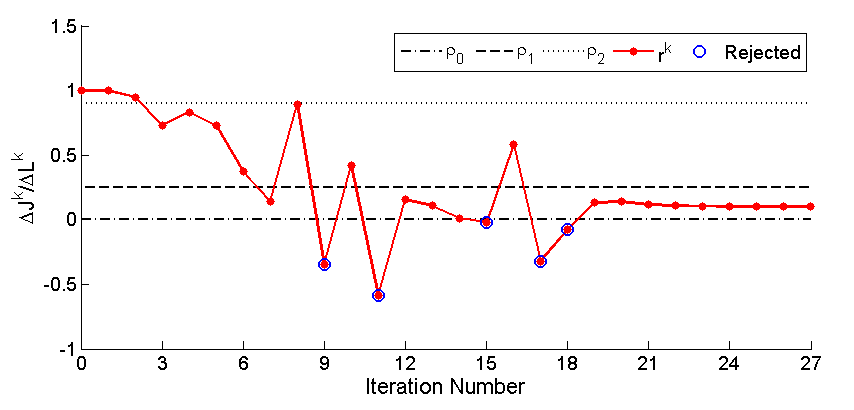}
		\caption{Algorithm behavior during the solution process. \fontseries{sb}\it The plot shows the time history of $r^k$ relative to the thresholds $\rho_0$, $\rho_1$, and $\rho_2$, and indicates the rejected steps.}
		\label{f:sim_algorithm}
	\end{center}
\vspace{-14pt}
\end{figure}

\addtolength{\textheight}{-3cm}

\section{CONCLUSIONS}

In this paper, we have proposed the \texttt{SCvx} algorithms. It successively convexifies nonlinear dynamics, and enables us to solve non-convex optimal control problems in real-time by actually solving a sequence of convex subproblem. We have given a relatively thorough description and analysis of the proposed algorithm, with the aid of a simple, yet illustrative numerical example. 
Convergence has been demonstrated in theoretical proofs, as well as illustrative figures and plots. The main result is: for the infinite convergent case, we have proved that every limit point will satisfied the optimality condition for the original non-convex optimal control problems.
A key next step is to incorporate this algorithm with other convexification techniques to construct a comprehensive algorithmic framework to deal with a  large class of non-convexities.

\subsection*{Acknowledgments}
The authors gratefully acknowledge John Hauser of University of Colorado for his deep and valuable insights.


\bibliographystyle{IEEEtran}
\bibliography{SCvx}

\begin{thebibliography}{10}
\providecommand{\url}[1]{#1}
\csname url@samestyle\endcsname
\providecommand{\newblock}{\relax}
\providecommand{\bibinfo}[2]{#2}
\providecommand{\BIBentrySTDinterwordspacing}{\spaceskip=0pt\relax}
\providecommand{\BIBentryALTinterwordstretchfactor}{4}
\providecommand{\BIBentryALTinterwordspacing}{\spaceskip=\fontdimen2\font plus
\BIBentryALTinterwordstretchfactor\fontdimen3\font minus
  \fontdimen4\font\relax}
\providecommand{\BIBforeignlanguage}[2]{{%
\expandafter\ifx\csname l@#1\endcsname\relax
\typeout{** WARNING: IEEEtran.bst: No hyphenation pattern has been}%
\typeout{** loaded for the language `#1'. Using the pattern for}%
\typeout{** the default language instead.}%
\else
\language=\csname l@#1\endcsname
\fi
#2}}
\providecommand{\BIBdecl}{\relax}
\BIBdecl

\bibitem{pointing2013}
B.~{A\c c\i kme\c se}, J.~Carson, and L.~Blackmore, ``Lossless convexification
  of non-convex control bound and pointing constraints of the soft landing
  optimal control problem,'' \emph{IEEE Transactions on Control Systems
  Technology}, vol.~21, no.~6, pp. 2104--2113, 2013.

\bibitem{larssys12}
L.~Blackmore, B.~{A\c c\i kme\c se}, and J.~M. Carson, ``Lossless
  convexfication of control constraints for a class of nonlinear optimal
  control problems,'' \emph{System and Control Letters}, vol.~61, no.~4, pp.
  863--871, 2012.

\bibitem{richards2002}
A.~Richards, T.~Schouwenaars, J.~P. How, and E.~Feron, ``Spacecraft trajectory
  planning with avoidance constraints using mixed-integer linear programming,''
  \emph{Journal of Guidance, Control, and Dynamics}, vol.~25, no.~4, pp.
  755--764, 2002.

\bibitem{liu2014solving}
X.~Liu and P.~Lu, ``Solving nonconvex optimal control problems by convex
  optimization,'' \emph{Journal of Guidance, Control, and Dynamics}, vol.~37,
  no.~3, pp. 750--765, 2014.

\bibitem{buskens}
C.~Buskens and H.~Maurer, ``Sqp-methods for solving optimal control problems
  with control and state constraints: adjoint variables, sensitivity analysis,
  and real-time control,'' \emph{Journal of Computational and Applied
  Mathematics}, vol. 120, pp. 85--108, 2000.

\bibitem{gerdtsb}
M.~Gerdts, ``A nonsmooth newtom's method for control-state constrained optimal
  control problems,'' \emph{Mathematics and Computers in Simulation}, 2008.

\bibitem{hull1997}
D.~Hull, ``Conversion of optimal control problems into parameter optimization
  problems,'' \emph{Journal of Guidance, Control, and Dynamics}, vol.~20,
  no.~1, pp. 57--60, 1997.

\bibitem{BoydConvex}
S.~Boyd and L.~Vandenberghe, \emph{Convex Optimization}.\hskip 1em plus 0.5em
  minus 0.4em\relax Cambridge University Press, 2004.

\bibitem{alexd}
A.~Domahidi, E.~Chu, and S.~Boyd, ``{ECOS}: An{ SOCP} solver for {E}mbedded
  {S}ystems,'' in \emph{Proceedings European Control Conference}, 2013.

\bibitem{mattingley2012}
J.~Mattingley and S.~Boyd, ``Cvxgen: A code generator for embedded convex
  optimization,'' \emph{Optimization and Engineering}, vol.~13, no.~1, pp.
  1--27, 2012.

\bibitem{dueri2014automated}
D.~Dueri, J.~Zhang, and B.~A{\c{c}}ikmese, ``Automated custom code generation
  for embedded, real-time second order cone programming,'' in \emph{19th IFAC
  World Congress}, 2014, pp. 1605--1612.

\bibitem{garcia_morari_MPC}
C.~Garcia and M.~Morari, ``Model predictive control: theory and practice --- a
  survey,'' \emph{Automatica}, vol.~25, no.~3, pp. 335--348, 1989.

\bibitem{MPC_MayneRew00}
D.~Mayne, J.~Rawlings, C.~Rao, and P.~Scokaert, ``Constrained model predictive
  control: Stability and optimality,'' \emph{Automatica}, vol.~36, no.~6, pp.
  789--814, 2000.

\bibitem{Zeilinger2014683}
M.~N. Zeilinger, D.~M. Raimondo, A.~Domahidi, M.~Morari, and C.~N. Jones, ``On
  real-time robust model predictive control,'' \emph{Automatica}, vol.~50,
  no.~3, pp. 683 -- 694, 2014.

\bibitem{matt_aut1}
M.~Harris and B.~{A\c c\i kme\c se}, ``Lossless convexification of non-convex
  optimal control problems for state constrained linear systems,''
  \emph{Automatica}, vol.~50, no.~9, pp. 2304--2311, 2014.

\bibitem{behcet_aut11}
B.~{A\c c\i kme\c se} and L.~Blackmore, ``Lossless convexification of a class
  of optimal control problems with non-convex control constraints,''
  \emph{Automatica}, vol.~47, no.~2, pp. 341--347, 2011.

\bibitem{griffith1961nonlinear}
R.~E. Griffith and R.~Stewart, ``A nonlinear programming technique for the
  optimization of continuous processing systems,'' \emph{Management science},
  vol.~7, no.~4, pp. 379--392, 1961.

\bibitem{palacios1982nonlinear}
F.~Palacios-Gomez, L.~Lasdon, and M.~Engquist, ``Nonlinear optimization by
  successive linear programming,'' \emph{Management Science}, vol.~28, no.~10,
  pp. 1106--1120, 1982.

\bibitem{zhang1985improved}
J.~Zhang, N.-H. Kim, and L.~Lasdon, ``An improved successive linear programming
  algorithm,'' \emph{Management science}, vol.~31, no.~10, pp. 1312--1331,
  1985.

\bibitem{rosen1966iterative}
J.~B. Rosen, ``Iterative solution of nonlinear optimal control problems,''
  \emph{SIAM Journal on Control}, vol.~4, no.~1, pp. 223--244, 1966.

\bibitem{machielsen1987}
K.~C.~P. Machielsen, \emph{Numerical solution of optimal control problems with
  state constraints by sequential quadratic programming in function
  space}.\hskip 1em plus 0.5em minus 0.4em\relax Technische Universiteit
  Eindhoven, 1987.

\bibitem{mayne1987exact}
D.~Q. Mayne and E.~Polak, ``An exact penalty function algorithm for control
  problems with state and control constraints,'' \emph{Automatic Control, IEEE
  Transactions on}, vol.~32, no.~5, pp. 380--387, 1987.

\bibitem{han1979exact}
S.-P. Han and O.~L. Mangasarian, ``Exact penalty functions in nonlinear
  programming,'' \emph{Mathematical programming}, vol.~17, no.~1, pp. 251--269,
  1979.

\bibitem{conn2000trust}
A.~R. Conn, N.~I. Gould, and P.~L. Toint, \emph{Trust region methods}.\hskip
  1em plus 0.5em minus 0.4em\relax Siam, 2000, vol.~1.

\bibitem{clarke1976new}
F.~H. Clarke, ``A new approach to lagrange multipliers,'' \emph{Mathematics of
  Operations Research}, vol.~1, no.~2, pp. 165--174, 1976.

\bibitem{fletcher1981practical}
R.~Fletcher, \emph{Practical Methods of Optimization: Vol. 2: Constrained
  Optimization.}\hskip 1em plus 0.5em minus 0.4em\relax John Wiley \& Sons,
  1981.

\bibitem{clarke1975generalized}
F.~H. Clarke, ``Generalized gradients and applications,'' \emph{Transactions of
  the American Mathematical Society}, vol. 205, pp. 247--262, 1975.

\end{thebibliography}

\end{document}